\documentclass[11pt]{amsart}
\usepackage{color}
\usepackage{latexsym,amssymb,amsmath,youngtab}
\usepackage{epsfig}
\textwidth=16cm
\topmargin=0mm
\oddsidemargin=0mm
\evensidemargin=0mm
\textheight=22cm

\usepackage{amsmath,amsthm,amssymb,amscd}
\usepackage[mathscr]{eucal}
\usepackage{verbatim}

\hfuzz1pc

\newtheoremstyle{custom}% name
  {3pt}%      Space above
  {3pt}%      Space below
  {\slshape}%         Body font
  {}%         Indent amount (empty = no indent, \parindent = para indent)
  {\bfseries}% Thm head font
  {.}%        Punctuation after thm head
  { }%     Space after thm head: " " = normal interword space;
   {}%         Thm head spec (can be left empty, meaning `normal')
\theoremstyle{custom}
\newtheorem{theorem}{Theorem}[subsection]

\newtheorem{proposition}[theorem]{Proposition}
\newtheorem{proposition/definition}[theorem]{Proposition/Definition}

\theoremstyle{definition}
\newtheorem{definition}[theorem]{Definition}

\newtheorem{example}[theorem]{Example}

\theoremstyle{remark}
\newtheorem{remark}[theorem]{Remark}

% for when nothing else works

%\def\hin{$\bigstar$ }

%\def\hin{\epsfxsize=0.16in\epsfbox{smile.eps} \ }

%labelled equation

% The following macros control the format of exercises
\newtheoremstyle{exercise}% name
  {3pt}%      Space above
  {6pt}%      Space below
  {}%         Body font
  {}%         Indent amount (empty = no indent, \parindent = para indent)
  {\bfseries}% Thm head font
  {:}%        Punctuation after thm head
  { }%     Space after thm head: " " = normal interword space;
   {}%         Thm head spec (can be left empty, meaning `normal')
\theoremstyle{exercise}
\newtheorem{exercise}[theorem]{Exercise}
% the following should work, but doesn't
\newtheoremstyle{exercises}% name
  {3pt}%      Space above
  {6pt}%      Space below
  {}%         Body font
  {}%         Indent amount (empty = no indent, \parindent = para indent)
  {\bfseries}% Thm head font
  {:}%        Punctuation after thm head
  {\newline}%     Space after thm head: " " = normal interword space;
   {}%         Thm head spec (can be left empty, meaning `normal')

\theoremstyle{exercise}
\newtheorem{exercises}[theorem]{Exercises}

%optional argument is \label{whatever}

%optional is label, next is title

%\newcommand{\exerfuss}{
%\setlength{\topsep}{-10pt} %use -6pt with "exercises" theoremstyle
%\setlength{\itemsep}{-2pt}
%\setlength{\leftmargin}{0pt}  %would like to get rid of left indent
%\setlength{\labelwidth}{1em}
%\setlength{\labelsep}{0.6em}
%\setlength{\itemindent}{1.6em}% should be sum of two previous
%}

% \exerparts is used to generate sub-parts of an exercise within an exercise set

% sqlist is used on its own in Afrm.tex, to generate a roman list

% \exersethead is used in Ch.2 when there is a preamble after the exercise heading
% but before the parts of the exercise, which are put in using \exersetmiddle
%optional is \label{..}
%% For some reason, exersethead doesn't give the label the right value!
%\newcommand{\exerset}[2][{}]{\exersethead{#1}\exersetmiddle{#2}}
%chapter two uses a preamble between exercise heading & exercises

%\def\intprod{\negthinspace
%\mathbin{\raisebox{.4ex}{\hbox{\vrule height .5pt width 5pt depth 0pt %
%        \vrule height 3pt width .5pt depth 0pt}}}}

\input epsf
\def\boxit#1{\vbox{\hrule height1pt\hbox{\vrule width1pt\kern3pt
  \vbox{\kern3pt#1\kern3pt}\kern3pt\vrule width1pt}\hrule height1pt}}

%warning - command \La comes with a superscript

%this for Jarek's name

\def\overarrow{\vec}

\def\bv{\bold v}

\def\BC{\mathbb C}\def\BN{\mathbb N}

\def\BP{\mathbb P}

\def\fgl{\mathfrak g\mathfrak l}

\def\tdim{{\rm dim}}

\def\hd{,...,}

\def\upperp{{}^\perp}

\def\cS{{\mathcal S}}

\def\11{\mathbf 1}

\def\fsl{{\mathfrak {sl}}}

\def\a{\alpha}

\def\s{\sigma}

\def\ot{{\mathord{ \otimes } }}
\def\op{{\mathord{\,\oplus }\,}}
\def\otc{{\mathord{\otimes\cdots\otimes}\;}}

\def\ra{{\mathord{\;\rightarrow\;}}}

\def\overarrow{\vec}
\def\frak{\mathfrak}
\def\fgl{\frak g\frak l}\def\fsl{\frak s\frak l}

\def\op{\oplus}

\def\fff#1#2{\Bbb F\Bbb F^{#1}_{#2}}

\def\ep{\epsilon}
\def\op{\oplus}

%\label - labels an eqn \eqref -refers to an eqn

\def\s{\sigma}

\def\a{\alpha}

\def\ol{\overline}

\def\BP{\mathbb  P}
\def\BC{\mathbb  C}

\def\ep{\epsilon}

\def\opc{\op\cdots\op}

\def\eee#1#2#3{e^{#1}_{{#2}{#3}}}

\def\hd{, \hdots ,}

\def\ra{\rightarrow}

\def\tdet{\operatorname{det}}

\def\tperm{\operatorname{perm}}
\def\ttrace{\operatorname{trace}}

\def\tdim{\operatorname{dim}}

\def\tlim{\lim}

\def\upperp{{}^{\perp}}

\def\ctimes{\times \cdots\times}

\def\be{\begin{equation}}
\def\ene{\end{equation}}

\def\vp{{\bold V\bold P}}

\def\G{\Gamma}

\newcommand{\tEnd}{\operatorname{End}}

%these in case we change the ordering of the chapters later on

%temporary definition

%\newcommand{\tEnd}{\operatorname{End}}
%\newcommand{\tHom}{\operatorname{Hom}}

\def\bv{\bold v}\def\bV{\bold V}
\def\eee{\bold e}\def\fff{\bold f}

\begin{document}

\title{On the geometry of tensor network states}
\author{J.M. Landsberg, Yang Qi, and Ke Ye}
 
\begin{abstract} We answer a question of L. Grasedyck that arose in quantum information theory,
showing that the limit of tensors in a space of tensor network states need not be a tensor network
state. We also give geometric descriptions of 
spaces of tensor networks states corresponding
to trees and loops. Grasedyck's question has a surprising connection to the area of Geometric Complexity Theory, in that
the result is equivalent to the statement that the boundary of the Mulmuley-Sohoni type variety
associated to matrix multiplication is strictly larger than the projections  
of matrix multiplication (and re-expressions of matrix multiplication and its projections after changes of bases). Tensor Network States are also
related to graphical models in algebraic statistics.
\end{abstract}
\thanks{Landsberg  supported by NSF grant  DMS-1006353}
\email{jml@math.tamu.edu, yangqi@math.tamu.edu,kye@math.tamu.edu}
\maketitle

\section{Introduction}

\subsection{Origin in physics}
Tensors describe states of quantum mechanical systems. If a system has $n$ particles,
its state is an element of $H_1\otc H_n$ with $H_j$ Hilbert spaces.   In numerical many-body
physics, in particular solid state physics,   one wants to simulate
quantum states of thousands of particles, often arranged on a regular
lattice (e.g.,  atoms in a crystal). Due to the exponential growth of the 
dimension of $H_1\otc H_n$ with $n$, any na\"\i ve method of representing these
tensors  is
  intractable on a computer.  Tensor network states  were defined to
reduce the complexity of the spaces involved by restricting to a subset
of tensors that is physically reasonable, in the sense that
the corresponding spaces of tensors are only  locally  entangled
because 
 interactions (entanglement) in the physical world
 appear  to just happen locally.

  Such spaces have
been studied since the 1980's. These spaces are associated
to   graphs, and go under different names: {\it tensor network states},  {\it finitely correlated states (FCS)},
{\it valence-bond solids (VBS)},  {\it  matrix product
states (MPS)},  {\it  projected entangled pairs states (PEPS)},   and {\it  multi-scale entanglement renormalization ansatz
states (MERA)}, see, e.g., \cite{SandvikVidal,MR1158756,MR2284036,MR1104168,MR2231431,MR2566331} and the references therein. We will use the term tensor network states.

\subsection{Definitions and notation} 
For a graph $\G$ with edges $e_s$ and vertices $v_j$,
$s\in e(j)$ means   $e_s$ is  incident to $v_j$. If
$\G$ is directed, $s\in in(j)$ are the incoming edges
and $s\in out(j)$ the outgoing edges.

Let $V_1\hd V_n$ be complex vector spaces, let $\bv_i=\tdim V_i$.
Let $\G$ be a graph with $n$ vertices $v_j$, $1\leq j\leq n$, and
$m$ edges $e_s$, $1\leq s\leq m$,
and let $\overarrow{\eee}=(\eee_1\hd \eee_m)\in \BN^m$. Associate $V_j$ to the vertex $v_j$
and an auxiliary vector space $E_s$ of dimension $\eee_s$ to the
edge $e_s$. Make $\G$ into a directed graph. (The choice of directions will not effect the end result.)
Let $\bV=V_1\otc V_n$.

Let
\begin{align}
TNS(&\G,\overarrow{\eee}, \bV):=\\
&
\{ T\in \bold V\mid
\exists  T_j\in V_j\ot(\ot_{s\in in(j)}E_s) \ot (\ot_{t\in out(j)}E_t^*), 
\nonumber
{\rm \ such\ that\ } T=Con(T_1\otc T_n)
\}
\end{align}
where $Con$ is the  contraction of all the $E_s$'s with all
the $E_s^*$'s.

\begin{example} Let $\G$ be a graph with   two vertices and one
edge connecting them, then, 
$
TNS(\G,\eee_1, V_1\ot V_2)
$ 
is just the set of elements of $V_1\ot V_2$ of rank at
most $\eee_1$,   denoted $\hat\s_{\eee_1}(Seg(\BP V_1\times\BP V_2))$
and called the (cone over the) $\eee_1$-st secant variety 
of the Segre variety. To see this, let $\ep_1\hd \ep_{\eee_1}$ be
a basis of $E_1$ and $\ep^1\hd \ep^{\eee_1}$ the dual basis of $E^*$.
Assume, to avoid trivialities, that $\bv_1,\bv_2\geq \eee_1$.
Given $T_1\in V_1\ot E_1$ we may write
$T_1=u_1\ot \ep_1+\cdots + u_{\eee_1}\ot \ep_{\eee_1}$ for some
$u_{\a}\in V_1$. Similarly, given 
$T_2\in V_2\ot E_1^*$ we may write
$T_1=w_1\ot \ep^1+\cdots + w_{\eee_1}\ot \ep^{\eee_1}$ for some
$w_{\a}\in V_2$. Then $Con(T_1\ot T_2)=u_1\ot w_1
+\cdots + u_{\eee_1}\ot w_{\eee_1}$.
\end{example}

The graph used to define a set of tensor network states is often modeled to mimic the physical arrangement
of the particles, with edges connecting nearby particles, as nearby particles are the ones likely to be entangled.

\begin{remark} The construction of tensor network states in the physics literature does
not use a directed graph, because all vector spaces are Hilbert spaces, and thus
self-dual. However the sets of tensors themselves do not depend on
the Hilbert space structure of the vector space, which is why we omit this structure.
The small price to pay is the edges of the graph must be oriented, but all orientations
lead to the same set of tensor network states. 
\end{remark}

\subsection{Grasedyck's question} Lars Grasedyck asked:

\smallskip

{\it Is $TNS(\G,\overarrow{\eee}, \bV)$ Zariski closed? That is, given a sequence
of tensors $T_{\ep}\in \bV$ that converges to a tensor $T_0$, if $T_{\ep}\in TNS(\G,\overarrow{\eee}, \bV)$
for all $\ep\neq 0$,
can we conclude $T_{0}\in TNS(\G,\overarrow{\eee}, \bV)$?}

\smallskip

He mentioned that he could show this to be true when $\G$ was
a tree, but did not know the answer when  $\G$ is a triangle.

\begin{figure}[!htb]\begin{center}
\includegraphics[scale=.3]{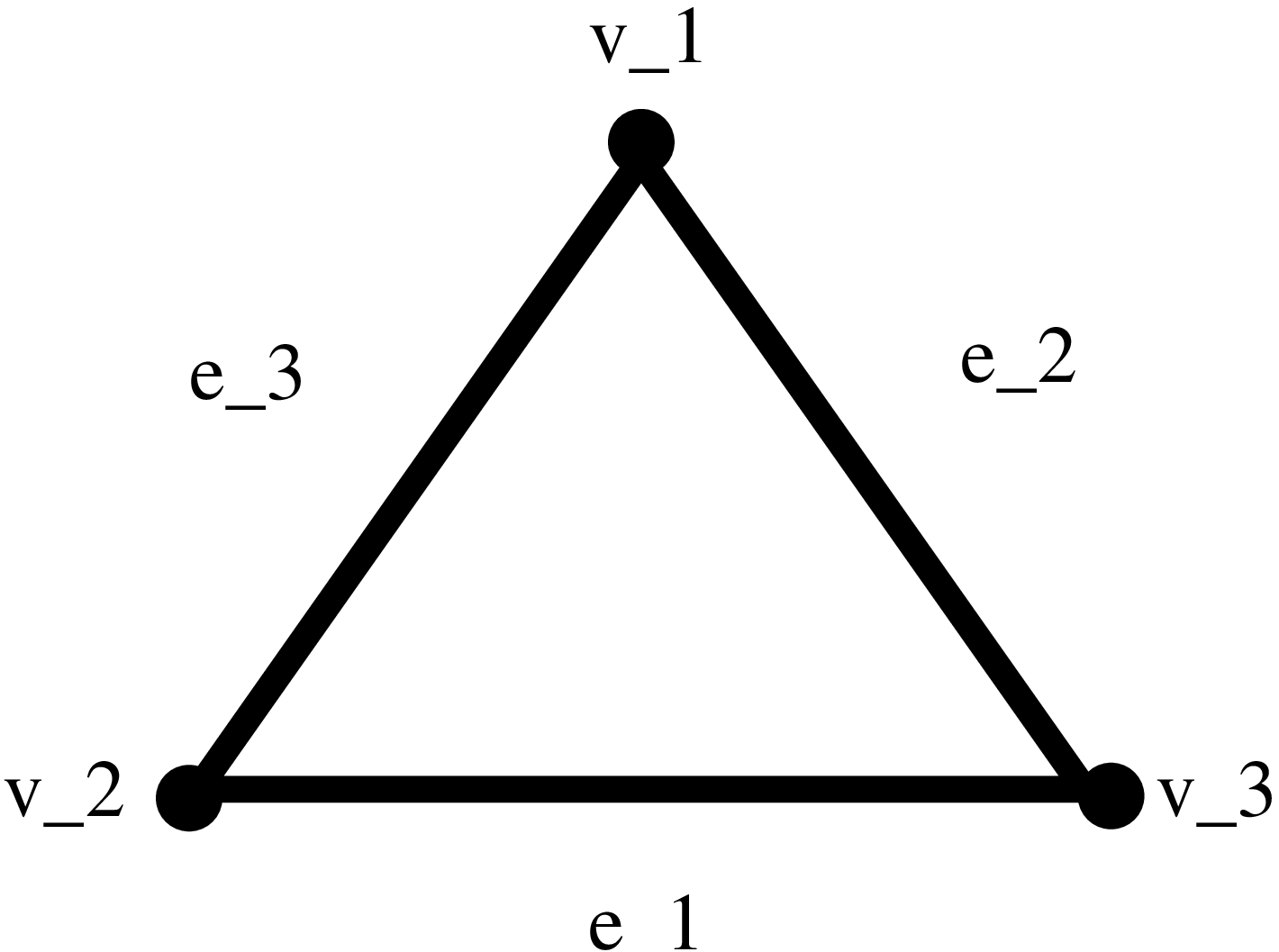}
%\caption{\small{ ... }}  
\end{center}
\end{figure}

\begin{definition} A dimension $\bv_j$ is {\it critical}, resp. {\it subcritical}, resp. 
{\it supercritical},
if $\bv_j=\Pi_{s\in e(j)}\eee_s$, resp. $\bv_j\leq \Pi_{s\in e(j)}\eee_s$, resp. $\bv_j\geq \Pi_{s\in e(j)}\eee_s$.
If $TNS(\G,\overarrow{\eee}, \bV)$ is critical for all $j$, we say $TNS(\G,\overarrow{\eee}, \bV)$
is critical, and similarly for sub- and super-critical.
\end{definition}

\begin{theorem} $TNS(\G,\overarrow{\eee}, \bV)$ is not Zariski closed for any $\G$ containing
a cycle whose vertices have  non-subcritical dimensions.  
\end{theorem}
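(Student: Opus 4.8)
The plan is to reduce to the case that $\G$ is a single cycle, exhibit an explicit one-parameter family inside $TNS$ whose limit escapes $TNS$, and detect the escape with tensor rank — a lower-semicontinuous invariant that can jump up in limits.

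\textbf{Reduction.} First I would reduce to the case where $\G=C$ is itself a cycle $v_1v_2\cdots v_mv_1$ with $m\ge 3$, every $\eee_s\ge 2$ along $C$, and every $\bv_j$ non-subcritical. Given a general $\G$ containing such a cycle (relabel so it uses $v_1,\dots,v_m$), I would choose at each cycle vertex $v_j$ a subspace $V_j'\subseteq V_j$ with $\dim V_j'=\Pi_{s\in e(j)\cap C}\eee_s+1\le\bv_j$, set $\bV'=V_1'\ot\cdots\ot V_m'$, and use that $TNS$ is stable under applying an arbitrary linear map to any tensor factor: tensoring a tensor network state on $(C,\overarrow{\eee}|_C,\bV')$ with a fixed rank one tensor on the remaining factors and filling the off--cycle edges with rank one factors lands inside $TNS(\G,\overarrow{\eee},\bV)$, while composing with coordinate functionals that kill the padding followed by the projections $V_j\twoheadrightarrow V_j'$ sends such a padded element back into $TNS(C,\overarrow{\eee}|_C,\bV')$; hence a non--closedness witness on $C$ produces one on $\G$. (If some $\eee_s=1$ along $C$ the cycle ``collapses'' and $TNS$ turns out to be closed, consistent with Grasedyck's observation for trees, so the interesting case is $\eee_s\ge 2$ throughout $C$.) This step is routine but fiddly; the content is in the cycle case.

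\textbf{Structure of $TNS$ on a loop.} Orienting $C$ cyclically, $T\in TNS(C,\overarrow{\eee}|_C,\bV')$ if and only if
\[
T=\sum_{a_1,\ldots,a_m}\operatorname{tr}\!\big(N^{(1)}_{a_1}N^{(2)}_{a_2}\cdots N^{(m)}_{a_m}\big)\,v_1^{(a_1)}\ot\cdots\ot v_m^{(a_m)}
\]
for matrices $N^{(j)}_b\in\operatorname{Hom}(E_j,E_{j-1})$; equivalently $TNS(C,\ldots)$ is the set of all tensors obtained from the fixed cyclic contraction tensor $T_\circ\in\operatorname{Hom}(E_1,E_m)\ot\operatorname{Hom}(E_2,E_1)\ot\cdots\ot\operatorname{Hom}(E_m,E_{m-1})$ by applying an arbitrary (not necessarily invertible) linear map to each tensor factor. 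For $m=3$ one checks that $T_\circ$ is, up to relabeling, the matrix multiplication tensor $M_{\langle\eee_1,\eee_2,\eee_3\rangle}$, so $TNS$ is exactly the set of projections of matrix multiplication and the statement becomes the Geometric Complexity Theory assertion of the abstract: this set is strictly smaller than its closure (the associated Mulmuley--Sohoni type variety).

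\textbf{The witness and the obstacle.} The set of linear images of a fixed tensor is constructible and dense in its closure but need not be closed, precisely because tensor rank is only lower semicontinuous: every $T\in TNS(C,\ldots)$, being a linear image of $T_\circ$, has $\operatorname{rank} T\le\operatorname{rank} T_\circ$, and this bound can fail in a limit. So I would produce a one-parameter family $T(\varepsilon)=(g_1(\varepsilon)\ot\cdots\ot g_m(\varepsilon))\,T_\circ\in TNS(C,\ldots)$ with $T_0:=\lim_{\varepsilon\to0}T(\varepsilon)$ satisfying $\operatorname{rank} T_0>\operatorname{rank} T_\circ$, whence $T_0\in\overline{TNS(C,\ldots)}\setminus TNS(C,\ldots)$; the $g_j(\varepsilon)$ form the classical border-rank (``$W$-state'') one-parameter subgroup, chosen so that after the appropriate rescaling an extra ``derivative'' summand survives in the limit and pushes the rank above $\operatorname{rank} T_\circ$. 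For $m=3$ this exhibits a boundary point of the Mulmuley--Sohoni variety of matrix multiplication that is not a projection of matrix multiplication. The main obstacle will be proving the strict inequality $\operatorname{rank} T_0>\operatorname{rank} T_\circ$: tensor-rank lower bounds are delicate, so the degenerating subgroup must be chosen so that the bound on $T_0$ is actually provable — e.g.\ by the substitution/lower-bound method or by recognizing $T_0$ as the structure tensor of an algebra of known rank. Everything else — the reduction above, membership $T(\varepsilon)\in TNS$ for $\varepsilon\ne0$, and the computation of $\lim_\varepsilon T(\varepsilon)$ — is routine.
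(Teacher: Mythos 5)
Your setup matches the paper's: for a critical loop, $TNS$ is exactly the image of the iterated matrix multiplication tensor $T_\circ$ under $\operatorname{End}(V_1)\times\cdots\times\operatorname{End}(V_n)$, the theorem reduces (locally) to the cycle case, and the task is to exhibit a curve in this set whose limit escapes it. The gap is in the certification step, which you correctly identify as "the main obstacle" but do not resolve — and the route you propose for it is the wrong one. Detecting the escape by showing $\operatorname{rank}T_0>\operatorname{rank}T_\circ$ is a valid sufficient criterion (every element of $TNS$ is a linear image of $T_\circ$, hence has rank at most $\operatorname{rank}T_\circ$), but carrying it out requires simultaneously a sharp upper bound on the rank of matrix multiplication (open already for $3\times 3$ matrices) and a strictly larger, provable lower bound on the rank of a specific degeneration of it; the available lower-bound techniques (substitution, flattenings) plateau around $3N$ for tensors in $\mathbb{C}^N\otimes\mathbb{C}^N\otimes\mathbb{C}^N$, which is already the order of the known lower bounds for $\operatorname{rank}M_{\langle n,n,n\rangle}$ itself. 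Even in the smallest case $\eee=2$ you would need a degeneration of $M_{\langle 2,2,2\rangle}$ with rank at least $8$, and you have neither produced one nor argued that one exists. (A terminological aside: "lower-semicontinuous\dots can jump up in limits" is self-contradictory; the relevant fact is that tensor rank is \emph{not} upper semicontinuous, i.e., the rank-$\le r$ locus is not closed.)

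The paper avoids rank entirely and replaces it with two checkable facts. It writes down an explicit curve $X_t=\frac{1}{\sqrt t}(X_0+tX_1)$, etc., built from complementary projections adapted to a pair of subspaces and their product, whose limit is
$\tilde M(P,Q,R)=\operatorname{trace}(P_0Q_0R_1)+\operatorname{trace}(P_0Q_1R_0)+\operatorname{trace}(P_1Q_0R_0)$.
First, $\tilde M$ is concise (uses all variables in each factor), whereas any element of $\operatorname{End}(V_1)\times\operatorname{End}(V_2)\times\operatorname{End}(V_3)\cdot M$ that is not in the $GL$-orbit fails to be concise; so it suffices to show $\tilde M$ is not in the $GL$-orbit of $M$. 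Second, that is a finite linear-algebra computation: the Lie algebra of the stabilizer of $\tilde M$ (for suitable choices of the splitting) has dimension $4\eee^2-2\eee$, strictly larger than the $3\eee^2-1$ of matrix multiplication, so $\tilde M$ cannot lie in the orbit. If you want to salvage your outline, the fix is to swap the rank jump for an invariant you can actually compute on the limit — conciseness plus the stabilizer dimension is the one that works.
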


\subsection*{Notation} $GL(V)$ denotes the group of  invertible linear maps $V\ra V$. $GL(V_1)\ctimes GL(V_n)$ acts on
$V_1\otc V_n$ by $(g_1\hd g_n)\cdot v_1\otc v_n=(g_1v_1)\otc (g_nv_n)$. (Here $v_j\in V_j$ and the action  on a tensor that is
a sum of rank one tensors is the sum of the actions on the rank one tensors.) Let $\tEnd(V)$ denote the set of all linear maps
$V\ra V$. We adopt the convention that $\tEnd(V_1)\ctimes \tEnd(V_n)$ acts on $V_1\otc V_n$  by
$(Z_1\hd Z_n)\cdot v_1\otc v_n=(Z_1v_1)\otc (Z_nv_n)$.   Let $\fgl(V)$ denote the Lie algebra of $GL(V)$. It is
naturally isomorphic to $\tEnd(V)$ but it acts on $V_1\otc V_n$ via the Leibnitz rule:
$(X_1\hd X_n)\cdot v_1\otc v_n=(X_1v_1)\ot v_2\otc v_n+ v_1\ot (X_2v_2)\ot v_3\otc v_n+\cdots v_1\otc v_{n-1}\ot (X_nv_n)$.
(This is because elements of the Lie algebra should be thought of as derivatives of curves in the Lie group at the identity.)
If $X\subset V$ is a subset, $\ol{X}\subset V$ denotes its closure. This closure is the same whether one uses
the Zariski closure, which is the common zero set of all polynomials vanishing on $X$, or the Euclidean closure,
where one fixes a metric compatible with the linear structure on $V$ and takes the closure with respect to limits.

\subsection{Connections to the GCT program}
The triangle case is especially interesting because we remark below that in the critical dimension case  it corresponds to
$$
End(V_1)\times End(V_2)\times End(V_3)\cdot Mmult_{\eee_3,\eee_2,\eee_1},
$$
where, setting $V_1=E_2^*\ot E_3$, $V_2=E_3^*\ot E_1$,  and
$V_3=E_2\ot E_1^*$,    $Mmult_{\eee_3,\eee_2,\eee_1}\in V_1\ot V_2\ot V_3$ is
the matrix multiplication operator, that is, as a tensor, 
$MMult_{\eee_3,\eee_2,\eee_1}=Id_{E_3}\ot Id_{E_2}\ot Id_{E_1}$. In
\cite{BIrank}    a {\it geometric complexity theory} (GCT) study
of $MMult$ and its  $GL(V_1)\times GL(V_2)\times
GL(V_3)$ orbit closure is considered. One sets $\eee_1=\eee_2=\eee_3=n$ and studies the
geometry as $n\ra \infty$.   It is a toy case of the varieties
introduced by Mulmuley and Sohoni \cite{MS1,MS2,BLMW},
  letting $S^d\BC^k$ denote the homogeneous polynomials of degree $d$ on $(\BC^k)^*$, 
the varieties are $\ol{GL_{n^2}\cdot  \tdet_n }\subset   S^n\BC^{n^2}$ and
$\ol{GL_{n^2}\cdot  \ell^{n-m}\tperm_m }\subset   S^n\BC^{n^2}$.
Here $\tdet_n\in S^n\BC^{n^2}$ is the determinant, a homogeneous polynomial
of degree $n$ in $n^2$ variables,  $n>m$,  $\ell\in S^1\BC^1$, $\tperm_m\in S^m\BC^{m^2}$ is the permanent
and an inclusion $\BC^{m^2+1}\subset \BC^{n^2}$ has been chosen.
In \cite{LMRdet} it was shown that
$End_{\BC^{n^2}}\cdot \tdet_n\neq \ol{GL_{n^2}\cdot \tdet_n}$, and   determining the  difference
between these sets is a subject of current research.

\smallskip

The critical loop case with $\eee_s=3$ for all $s$ is also related to the GCT program, as
it corresponds to the multiplication of $n$ matrices of size three. As a tensor, it may be
thought of as a map
$ (X_1\hd X_n)\mapsto\ttrace(X_1\cdots X_n)$.
This sequence of functions,  indexed by $n$, considered as a sequence of   homogeneous polynomials of degree $n$ on
$V_1\op\cdots \op V_n$, is complete for the class $\vp_e$ of sequences of polynomials of small formula
size, see \cite{clevebenor}.

\subsection*{Acknowledgments} We thank    David Gross for providing background information,
  Lars Grasedyck for introducing us to tensor network states and posing the Zariski closure question, and the referee for
suggestions that have substantially improved the presentation of the paper.

\section{Critical loops}

\begin{proposition}
  Let $\bv_1=\eee_2\eee_3,\bv_2=\eee_3\eee_1,\bv_3=\eee_2\eee_1$. Then
$
TNS(\triangle, (\eee_2\eee_3,\eee_3\eee_1,\eee_2\eee_1), V_1\ot V_2\ot V_3)
$
consists of matrix multiplication   and its degenerations (and their different expressions after changes of bases), i.e., 
$$
TNS(\triangle, (\eee_2\eee_3,\eee_3\eee_1,\eee_2\eee_1), V_1\ot V_2\ot V_3)= End(V_1)\times End(V_2)\times End(V_3)\cdot M_{\eee_2,\eee_3,\eee_1}.
$$
It has dimension $\eee_2^2\eee_3^2+ \eee_2^2\eee_1^2+\eee_3^2\eee_1^2 -
(\eee_2^2+\eee_3^2+\eee_1^2-1)$.

More generally, if $\G$ is a critical loop, 
$
TNS(\G, (\eee_n\eee_1,\eee_1\eee_2\hd \eee_{n-1}\eee_n), V_1\otc  V_n)
$
is $\tEnd(V_1)\ctimes \tEnd(V_n)\cdot M_{\overarrow {\bold e}}$, where
$M_{\overarrow {\bold e}}: V_1\ctimes V_n\ra \BC$ is the matrix multiplication operator
$(X_1\hd X_n)\mapsto \ttrace(X_1\cdots X_n)$.
\end{proposition}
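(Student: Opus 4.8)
The plan is to show that, in the critical case, the data $(T_1\hd T_n)$ defining a tensor network state on the loop is \emph{literally} a tuple of endomorphisms $(Z_1\hd Z_n)\in\tEnd(V_1)\ctimes\tEnd(V_n)$ applied to the physical legs of the ``identity'' factors, and that contracting those identity factors around the loop produces exactly the matrix multiplication operator $M_{\overarrow{\eee}}$; the contraction map then becomes $(Z_1\hd Z_n)\mapsto(Z_1\hd Z_n)\cdot M_{\overarrow{\eee}}$, whose image is by definition $\tEnd(V_1)\ctimes\tEnd(V_n)\cdot M_{\overarrow{\eee}}$. Concretely, I would first orient $\G$ as a directed cycle (by the Remark the orientation is irrelevant) and label the edges cyclically so that $v_j$ is incident to $e_{j-1}$ (incoming) and $e_j$ (outgoing); then an admissible factor at $v_j$ lies in $V_j\ot E_{j-1}\ot E_j^*$, and criticality reads $\bv_j=\eee_{j-1}\eee_j=\tdim(E_{j-1}\ot E_j^*)$ (indices mod $n$; the triangle statement is the case $n=3$ after the obvious renaming of edges). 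Hence I may choose the identification $V_j\cong E_{j-1}^*\ot E_j$, under which the auxiliary space $E_{j-1}\ot E_j^*$ attached to $v_j$ is canonically $V_j^*$. So $T_j\in V_j\ot V_j^*=\tEnd(V_j)$: each admissible factor at $v_j$ is the tensor $Z_j\cdot Id_{V_j}$ of a unique $Z_j\in\tEnd(V_j)$ (where $Z_j$ acts on the physical $V_j$-factor and $Id_{V_j}\in V_j\ot V_j^*$ is the identity tensor), and conversely every $Z_j$ occurs this way, independently over $j$.

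Two elementary facts then finish the proof. First, $Con(Id_{V_1}\otc Id_{V_n})=M_{\overarrow{\eee}}$: writing each $Id_{V_j}$ in dual bases and performing the contractions of the $E_s$'s against the $E_s^*$'s around the loop simply wires the physical legs $V_j=E_{j-1}^*\ot E_j$ together into a single cycle, producing $\bigotimes_j Id_{E_j}$ reorganized as an element of $\bV$; this is precisely $M_{\overarrow{\eee}}$ realized as a tensor in $\bV$, whose associated multilinear form is $(X_1\hd X_n)\mapsto\ttrace(X_1\cdots X_n)$ (up to the harmless cyclic re-indexing induced by the choice of orientation). Second, $Con$ involves only the auxiliary factors $E_s,E_s^*$ while each $Z_j$ acts only on the physical factor $V_j$, so the two operations occupy disjoint tensor slots and commute; therefore
\[
Con\bigl(Z_1\!\cdot\! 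Id_{V_1}\otc Z_n\!\cdot\! Id_{V_n}\bigr)=(Z_1\hd Z_n)\cdot Con(Id_{V_1}\otc Id_{V_n})=(Z_1\hd Z_n)\cdot M_{\overarrow{\eee}}.
\]
Combining: for any admissible $T_1\hd T_n$ one gets $Con(T_1\otc T_n)=(Z_1\hd Z_n)\cdot M_{\overarrow{\eee}}$, which gives $\subseteq$; and conversely, given $(Z_1\hd Z_n)$, the factors $T_j:=Z_j\cdot Id_{V_j}$ are admissible and realize $(Z_1\hd Z_n)\cdot M_{\overarrow{\eee}}$, giving $\supseteq$. This is the asserted equality.

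For the dimension in the triangle case, note that $\tEnd(V)\cdot v$ and $GL(V)\cdot v$ always have the same closure (and a constructible set has the dimension of its closure), so the dimension of $TNS(\triangle,\ldots)$ equals $\tdim\,\ol{GL(V_1)\times GL(V_2)\times GL(V_3)\cdot M_{\eee_2,\eee_3,\eee_1}}=\bv_1^2+\bv_2^2+\bv_3^2-\tdim(\text{stabilizer of }M_{\eee_2,\eee_3,\eee_1})$; by the classical determination of the symmetry group of the matrix multiplication tensor, this stabilizer in $GL(V_1)\times GL(V_2)\times GL(V_3)$ is the image of $GL(E_1)\times GL(E_2)\times GL(E_3)$ acting on the edge spaces, and the kernel of that action is the one-parameter group of common scalars, so the stabilizer has dimension $\eee_1^2+\eee_2^2+\eee_3^2-1$, yielding $\eee_2^2\eee_3^2+\eee_3^2\eee_1^2+\eee_2^2\eee_1^2-(\eee_1^2+\eee_2^2+\eee_3^2-1)$. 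The only genuine obstacle in all of this is bookkeeping — arranging the dualizations and the orientation so that the auxiliary space at each vertex is honestly $V_j^*$ and $Con$ of the identity factors is honestly $M_{\overarrow{\eee}}$; once those identifications are pinned down, both facts above are immediate.
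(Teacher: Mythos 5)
Your proof is correct and follows essentially the same route as the paper's: under the critical identification $V_j\cong E_{j-1}^*\ot E_j$ the factor $T_j\in V_j\ot E_{j-1}\ot E_j^*$ becomes an endomorphism of $V_j$ applied to the identity tensor, and contracting around the loop yields $(Z_1,\dots,Z_n)\cdot M_{\overarrow{\eee}}$ (the paper writes this out in coordinates for a generic, i.e.\ invertible, $T_1$ and leaves the degenerate factors implicit, which you handle explicitly). Your dimension count via the classical stabilizer of matrix multiplication, giving stabilizer dimension $\eee_1^2+\eee_2^2+\eee_3^2-1$ including the scalar directions, matches the stated formula and is the same computation the paper delegates to its Proposition~2.2.
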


\begin{proof} For the triangle case, a generic element
$T_1\in   E_2\ot E_3^*\ot  V_1$
may be thought of as a linear isomorphism
$E_2^*\ot E_3\ra V_1 $, identifying
$V_1$ as a space of $\eee_2\times \eee_3$-matrices,
and similarly for $V_2,V_3$.
Choosing bases $e^{u_s}_s$ for $E_s^*$, with dual basis $e_{u_s,s}$ for $E_s$,  
induces bases $x^{u_2}_{u_3}$ for $V_1$ etc.. Let $1\leq i\leq \eee_2$, $1\leq \a\leq \eee_3$,
$1\leq u\leq \eee_1$. Then
$$
con(T_1\ot T_2\ot T_3)=
\sum x^i_\a\ot y^\a_u\ot z^u_i
$$
which is the matrix multiplication operator. The general case is similar.
\end{proof}

\begin{proposition}\label{tristab} The Lie algebra of the stabilizer of $M_{\eee_n\eee_1,\eee_1\eee_2\hd \eee_{n-1}\eee_n}$ in
$GL(V_1) \ctimes GL(V_n)$ is the image of  
$ \fsl(E_1)\opc \fsl(E_n)$ under the map 
\begin{align*}
\a_1\opc \a_n\mapsto &(Id_{E_n}\ot \a_1, -\a_1^T\ot Id_{E_2},0\hd 0)
+(0,  Id_{E_1}\ot \a_2, -\a_2^T\ot Id_{E_3},0\hd 0)\\
&+\cdots + (-\a_n^T\ot Id_{E_1},0\hd 0,Id_{E_{n-1}}\ot \a_n).
\end{align*}
Here $\fsl(E_j)\subset \fgl(E_j)$ denotes the traceless endomorphisms and $T$ as a superscript denotes transpose  (which is
really just cosmetic).
\end{proposition}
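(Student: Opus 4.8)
\emph{Plan of proof.} This is an infinitesimal statement: the Lie algebra in question is the annihilator
\[
\{(\beta_1\hd\beta_n)\in\fgl(V_1)\opc\fgl(V_n)\mid \sum_{j} M_{\overarrow{\eee}}(v_1\hd\beta_jv_j\hd v_n)=0 \text{ for all } v_i\in V_i\},
\]
a linear subspace of $\fgl(V_1)\opc\fgl(V_n)$. I would prove the Proposition in two steps: (i) the displayed map lands inside this annihilator, and (ii) its image is the entire annihilator. Because the annihilator is linear, step (ii) amounts to a dimension count once (i) is known.

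For (i) the cleanest route is to integrate. Identify $V_j$ with $\operatorname{Hom}(E_{j-1},E_j)$, indices read cyclically mod $n$. Then $GL(E_1)\ctimes GL(E_n)$ acts on $\bV$ by $X_j\mapsto g_jX_jg_{j-1}^{-1}$, and since $M_{\overarrow{\eee}}(X_1\hd X_n)=\ttrace(X_n\cdots X_1)$ the conjugating factors telescope around the loop, so this action fixes $M_{\overarrow{\eee}}$; differentiating the one-parameter subgroup $g_k(t)=\exp(t\a_k)$ (with $\a_k\in\fsl(E_k)$, or for that matter $\a_k\in\fgl(E_k)$) reproduces exactly the $k$-th summand of the displayed map, which therefore annihilates $M_{\overarrow{\eee}}$. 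A direct check is also quick: on $M_{\overarrow{\eee}}=Id_{E_1}\ot\cdots\ot Id_{E_n}$, the Leibnitz action of the $\a_j$-summand affects only the factor $Id_{E_j}\in E_j\ot E_j^*$ (with $E_j\subset V_j$ and $E_j^*\subset V_{j+1}$), and the contributions from the two slots $j,j+1$ are negatives of each other once the repeated index is contracted, hence cancel.

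For (ii) I would invoke the previous Proposition, which identifies $TNS(\G,\overarrow{\eee},\bV)$ with $\tEnd(V_1)\ctimes\tEnd(V_n)\cdot M_{\overarrow{\eee}}$; this is the closure of the $GL(V_1)\ctimes GL(V_n)$-orbit of $M_{\overarrow{\eee}}$ (the orbit is dense in it, as $GL(V_j)$ is dense in $\tEnd(V_j)$). Hence the annihilator has dimension $\sum_j(\tdim V_j)^2-\dim TNS(\G,\overarrow{\eee},\bV)$, a number the dimension formula of the previous Proposition makes explicit (for the triangle it is $\eee_1^2+\eee_2^2+\eee_3^2-1$). On the other side, the kernel of the displayed map is read off one slot at a time: an identity $Id_{E_{j-1}}\ot\delta_j=\gamma_{j-1}^T\ot Id_{E_j}$ forces $\delta_j$ and $\gamma_{j-1}$ to be one common scalar, so on $\fgl(E_1)\opc\fgl(E_n)$ the kernel is the line of diagonal scalars and the image has the same dimension as the annihilator. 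Comparing, the image equals the annihilator; together with (i) this proves the Proposition. (Strictly, it is the image of $\fgl(E_1)\opc\fgl(E_n)$ that fills the annihilator; passing to $\fsl(E_1)\opc\fsl(E_n)$ loses the $n-1$ scalar directions on the $V_j$ that the traceful parts of the $\a_j$ contribute.)

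The crux is (ii): that $M_{\overarrow{\eee}}$ has no infinitesimal symmetry beyond the conjugations above. For the triangle this is in essence De Groote's determination of the isotropy group of the matrix multiplication tensor; for a general loop one could push that representation-theoretic argument around the cycle, but it is less work to extract the needed dimension from the previous Proposition as above. Everything else is bookkeeping — keeping track of which tensor factor of $V_j$ is $E_{j-1}^*$ versus $E_j$, and of the transposes $\a_j^T$ — which, as the statement notes, is cosmetic.
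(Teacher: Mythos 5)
The paper offers no proof of this Proposition (``safely left to the reader''), so you are on your own here; judged on its own terms, your step (i) is correct and is the standard argument (the conjugation action of $GL(E_1)\ctimes GL(E_n)$ telescopes around the loop, and the two appearances of each $\a_j$ cancel under the Leibnitz rule). The gap is in step (ii), which is where all the content of the Proposition lives, and your main argument for it is circular. The dimension formula for $TNS(\G,\overarrow{\eee},\bold V)$ that you invoke is asserted in the first Proposition but not proved there --- its proof only establishes the identification with $\tEnd(V_1)\ctimes \tEnd(V_n)\cdot M$ --- and in the paper's own logic that dimension is deduced \emph{from} the present Proposition (see the sentence ``By Proposition \ref{tristab}, for a critical loop, $\tdim(TNS(\G,\overarrow{\eee},\bold V))=\dots$'' immediately following it). So you cannot use it to compute the dimension of the annihilator. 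What is needed is a direct argument that there are no infinitesimal symmetries beyond the displayed ones: take $(\beta_1\hd \beta_n)$ with $\sum_j \beta_j\cdot M=0$, expand in bases exactly as is done for $\tilde M$ in the proof of Theorem \ref{tnsthm}, and show each $\beta_j$ must have the form $Id\ot\a_j-\a_{j-1}^T\ot Id$. Your appeal to de Groote's isotropy theorem is the right fallback for the triangle, but ``push that argument around the cycle'' is a gesture, not a proof, for general $n$; either carry out the linear-algebra computation or cite a reference that covers loops of arbitrary length.

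Your closing parenthetical is in fact a substantive observation and should be promoted from an aside to a correction. Every component of an element in the image of $\fsl(E_1)\opc\fsl(E_n)$ is traceless (the trace of $Id_{E_{j-1}}\ot\a_j-\a_{j-1}^T\ot Id_{E_j}$ is $\eee_{j-1}\ttrace(\a_j)-\eee_j\ttrace(\a_{j-1})=0$), so that image misses, e.g., the element obtained from $\a_1=Id_{E_1}$, namely $(Id_{V_1},-Id_{V_2},0\hd 0)$, which visibly annihilates $M$ under the Leibnitz action. Since the map is injective on $\fsl(E_1)\opc\fsl(E_n)$ while on $\fgl(E_1)\opc\fgl(E_n)$ its kernel is only the line of common scalars, the $\fsl$-image has dimension $\sum_j\eee_j^2-n$ whereas the stabilizer Lie algebra has dimension $\sum_j\eee_j^2-1$; only the $\fgl$ version is consistent with the dimension formula $\tdim(TNS)=\sum_j\eee_{j}^2\eee_{j+1}^2-(\eee_1^2+\cdots+\eee_n^2-1)$ that the paper derives from this Proposition. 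So the statement as printed undercounts by $n-1$, and a complete write-up should prove (and state) that the stabilizer Lie algebra is the image of $\fgl(E_1)\opc\fgl(E_n)$.
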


The proof is safely left to the reader.

Large loops are referred to as \lq\lq 1-D systems
with periodic boundary conditions\rq\rq\  in the physics literature and are often  used in   simulations. By Proposition \ref{tristab},
for a critical loop, $\tdim(TNS(\G,\overarrow{\eee},\bold V))=\eee_1^2\eee_2^2+\cdots + \eee_{n-1}^2\eee_n^2+\eee_n^2\eee_1^2
-(\eee_1^2+\cdots +\eee_n^2-1)$, compared with the ambient space which has
dimension $\eee_1^2\cdots  \eee_n^2$. For example, when $\eee_j=2$ for all $j$,  
  $\tdim (TNS(\G,\overarrow{\eee},\bold V))=12n+1$, compared with $\tdim
\bold V= 4^n$.

\section{Zariski closure}

\begin{theorem}\label{tnsthm}  Let $\bv_1=\eee_2\eee_3,\bv_2=\eee_3\eee_1,\bv_3=\eee_2\eee_1$. Then
$
TNS(\triangle, (\eee_2\eee_3,\eee_3\eee_1,\eee_2\eee_1), V_1\ot V_2\ot V_3)
$
is not Zariski closed.
More generally any $TNS(\G,\eee,\bold V)$ where $\G$ contains a cycle with no subcritical vertex is
not Zariski closed.
\end{theorem}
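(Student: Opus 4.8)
The plan is to reduce everything to the triangle case and to exhibit, by an explicit one-parameter construction, a tensor that lies in the Zariski closure of $TNS(\triangle,(\eee_2\eee_3,\eee_3\eee_1,\eee_2\eee_1),V_1\ot V_2\ot V_3)$ but not in the set itself. By the first Proposition of the \emph{Critical loops} section, in the critical case this $TNS$ is exactly the orbit $\tEnd(V_1)\times\tEnd(V_2)\times\tEnd(V_3)\cdot M_{\eee_2,\eee_3,\eee_1}$, i.e.\ matrix multiplication and all its degenerations under independent endomorphisms of the three factors. So the assertion is equivalent to saying that this $\tEnd^{\times 3}$-orbit is not closed, which is precisely the phenomenon recorded for the determinant in \cite{LMRdet}: the closure of a $GL$-orbit is strictly larger than the $\tEnd$-orbit. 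First I would take the smallest interesting instance, $\eee_1=\eee_2=\eee_3=2$, so that $M:=M_{2,2,2}\in V_1\ot V_2\ot V_3$ with $\dim V_j=4$, and work there; the general critical triangle follows by the standard device of padding the $E_s$ with extra basis vectors on which everything acts trivially, and a general non-subcritical triangle follows because a supercritical $TNS$ contains (as the image of a coordinate projection / inclusion of the auxiliary spaces) a critical one, and a coordinate subspace slice of a Zariski-closed set is Zariski closed — so non-closedness propagates upward.

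The core step is to produce the limit tensor. I would look for curves $g_i(t)\in GL(V_i)$, $i=1,2,3$, degenerating as $t\to 0$, such that $T(t):=(g_1(t),g_2(t),g_3(t))\cdot M$ has a finite limit $T_0=\lim_{t\to 0}T(t)$, and then argue $T_0\notin \tEnd^{\times 3}\cdot M$. Concretely, one rescales so that the divergent and vanishing eigendirections of the $g_i(t)$ balance across the contraction of the $E_s$'s; in the matrix-multiplication picture this is the familiar trick of conjugating the identity matrices $Id_{E_s}$ by $\mathrm{diag}(t^{a},t^{-a})$-type curves so that the trilinear form $\sum x^i_\a y^\a_u z^u_i$ limits to a strictly ``more degenerate'' trilinear form of the same format. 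I expect the natural candidate for $T_0$ to be a tensor whose multilinear ranks, or more robustly whose $GL(V_1)\times GL(V_2)\times GL(V_3)$-orbit-closure-membership, can be read off, and the point will be that $T_0$ has smaller \emph{border rank–type} invariants than anything of the form $(Z_1,Z_2,Z_3)\cdot M$: every element of $\tEnd^{\times 3}\cdot M$ is a linear image of matrix multiplication on each leg, hence its slice spaces (the images of $V_i^*$ acting into $V_j\ot V_k$) are spanned by \emph{rank-$\leq \eee_s$ matrices factoring through the relevant $E_s$}, and I would pick $T_0$ to violate exactly this structural constraint while still being a limit.

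The key steps in order are: (1) invoke the Proposition to identify the critical triangle $TNS$ with $\tEnd(V_1)\times\tEnd(V_2)\times\tEnd(V_3)\cdot M_{\eee_2,\eee_3,\eee_1}$; (2) in the $2\times2\times2$ case write down explicit diagonal (or near-diagonal) curves $g_i(t)$, compute $T(t)=(g_1(t),g_2(t),g_3(t))\cdot M$ in the basis $x^i_\a\ot y^\a_u\ot z^u_i$, and check the limit $T_0$ exists and is nonzero; (3) verify $T_0\in\overline{\tEnd^{\times3}\cdot M}$ — immediate, since it is a limit of $GL^{\times3}$-translates of $M$, which all lie in the orbit, and the orbit closure of $M$ equals the closure of the $TNS$ set; (4) prove $T_0\notin\tEnd^{\times3}\cdot M$ by a structural/invariant-theoretic argument: show that any $(Z_1,Z_2,Z_3)\cdot M$ has a property (e.g.\ a lower bound on some multilinear rank, or the existence of a full-rank element in each of its three pencils of slices, inherited from the identity matrices inside $M$) that $T_0$ lacks; (5) promote to arbitrary critical loops via the padding argument already sketched in the Proposition's proof (``the general case is similar''), and to non-subcritical cycles by restricting to a critical sub-$TNS$ as a coordinate slice. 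The main obstacle is step (4): producing a \emph{robust} invariant that genuinely separates the $\tEnd$-orbit from its closure — mere dimension counts will not do, since the orbit is dense in its closure, so I would expect to need either the explicit classification of the $\tEnd^{\times3}$-orbit structure of $M_{2,2,2}$ (small enough to be tractable by hand) or an argument transplanted from \cite{LMRdet}/\cite{BIrank} showing the limit tensor is not a ``restriction'' of matrix multiplication in the sense of tensors. Once the separating property for $T_0$ is in hand, steps (1)–(3) and (5) are routine.
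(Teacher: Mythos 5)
Your high-level strategy is the same as the paper's: identify the critical triangle $TNS$ with $\tEnd(V_1)\times\tEnd(V_2)\times\tEnd(V_3)\cdot M_{\eee_2,\eee_3,\eee_1}$, note that closedness would force it to equal $\overline{GL(V_1)\times GL(V_2)\times GL(V_3)\cdot M}$, degenerate $M$ along a curve in the $GL$'s, and show the limit escapes the $\tEnd^{\times 3}$-orbit; the reductions to larger loops and to cycles inside larger graphs are also essentially the paper's. But there is a genuine gap at your step (4), which you yourself flag as the main obstacle, and the candidate invariants you float cannot fill it: slice ranks, multilinear ranks, and ``every slice is a rank-$\le\eee_s$ matrix factoring through $E_s$'' are all \emph{closed} conditions (rank-$\le r$ loci are Zariski closed), so any limit $T_0$ of points of the set automatically inherits them; no such invariant can separate $T_0$ from $\tEnd^{\times 3}\cdot M$. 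The paper's resolution is two-pronged, and you need both prongs. First, the curve is chosen so that the limit $\tilde M$ \emph{retains} full multilinear rank (``uses all the variables'', i.e.\ all three flattenings $V_i^*\to V_j\ot V_k$ are injective); since every element of $\tEnd^{\times3}\cdot M$ that is not in the open orbit $GL^{\times3}\cdot M$ has at least one non-injective flattening, this confines $\tilde M$ to being either a genuinely new boundary point or a point of the open orbit itself. Second, the open orbit is excluded by computing the Lie algebra of the stabilizer of $\tilde M$ and showing its dimension (e.g.\ $4\eee^2-2\eee$ for the diagonal/off-diagonal splitting) strictly exceeds that of the stabilizer of $M$ ($3\eee^2-1$). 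Note this second step is precisely a ``dimension count,'' which you dismissed as insufficient; it is insufficient alone, but combined with the multilinear-rank step it closes the argument.

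A secondary but real problem is that your step (2) never produces a curve. The suggestion of ``conjugating the identity matrices $Id_{E_s}$ by $\mathrm{diag}(t^a,t^{-a})$-type curves'' cannot work: $M=Id_{E_3}\ot Id_{E_2}\ot Id_{E_1}$ is preserved (up to its stabilizer) by all product-form elements $g\ot h\in GL(E_s^*)\times GL(E_t)$ acting on $V_j=E_s^*\ot E_t$, so degenerations of product form never leave the orbit. The degenerating directions must be non-product: the paper takes $X_t=t^{-1/2}(X_0+tX_1)$ with $X_0,X_1$ complementary projections onto subspaces $U_{E_2E_3}\subset E_2^*\ot E_3$, etc., subject to the one compatibility that $Z_0$ projects onto $U_{E_1E_2}^{\perp}$ with $U_{E_1E_2}=Con(U_{E_2E_3},U_{E_3E_1})$, which kills the $t^{-3/2}$ term $\ttrace(P_0Q_0R_0)$ and yields the finite limit $\ttrace(P_0Q_0R_1)+\ttrace(P_0Q_1R_0)+\ttrace(P_1Q_0R_0)$. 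Without an explicit choice of this kind you can verify neither that the limit exists nor that it has full multilinear rank, so both halves of the non-membership argument remain open.
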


\begin{proof}
Were $
T(\triangle):=TNS(\triangle, (\eee_2\eee_3,\eee_3\eee_1,\eee_2\eee_1), V_1\ot V_2\ot V_3)
$ Zariski closed, it  would be
\be\label{wereitclosed}
\ol{ GL(V_1)\times GL(V_2)\times GL(V_3)\cdot M_{\eee_2,\eee_3,\eee_1}}.
\ene
To see this, note that the $G=GL(V_1)\times GL(V_2)\times GL(V_3)$ orbit
of matrix multiplication is a Zariski open subset of $T(\triangle)$
of the same dimension as $T(\triangle)$.

We need to find a curve $g(t)=(g_1(t),g_2(t),g_3(t))$ such that $g_j(t)\in GL(V_j)$ for all $t\neq 0$ 
and $\tlim_{t\ra 0} g(t)\cdot  M_{\eee_2,\eee_3,\eee_1}$ is both defined and not in  $End(V_1)\times End(V_2)\times End(V_3)\cdot M_{\eee_2,\eee_3,\eee_1}$.

Note that for $(X,Y,Z)\in GL(V_1)\times GL(V_2)\times GL(V_3)$, we have  $(X,Y,Z) \cdot M_{\eee_2,\eee_3,\eee_1}(P,Q,R)
=\ttrace (X(P) Y(Q)Z(R))$. Here
$X: E_2^*\ot E_3\ra E_2^*\ot E_3$,
$Y: E_3^*\ot E_1\ra E_3^*\ot E_1$, $Z: E_1^*\ot E_2\ra E_1^*\ot E_2$.

 Take subspaces
$U_{E_2E_3}\subset E_2^*\ot E_3$, $U_{E_3E_1}\subset E_3^*\ot E_1$. Let $U_{E_1E_2}:=Con(U_{E_2E_3},U_{E_3E_1})\subset E_2^*\ot E_1$
be the images of all the $pq\in E_2^*\ot E_1$ where $p\in U_{E_2E_3}$ and $q\in U_{E_3E_1}$
(i.e.,  the   matrix multiplication of all pairs of elements).  Take $X_0,Y_0,Z_0$ respectively
to be the projections to 
$U_{E_2E_3}$, $U_{E_3E_1}$ and $ U_{E_1E_2}\upperp$. 
Let $X_1,Y_1,Z_1$ be the projections to   complementary
spaces (so, e.g.,  $X_0+X_1=Id_{V_1^*}$). For $P\in V_1^*$, write $P_0=X_0(P)$ and $P_1=X_1(P)$,  and similarly for $Q,R$.

Take  the curve $(X_t,Y_t,Z_t)$ with 
$X_t=\frac 1{\sqrt{t}}(X_0 + tX_1)$, $Y_t=\frac 1{\sqrt{t}}(Y_0+tY_1)$, $Z_t=\frac 1{\sqrt{t}}(Z_0+tZ_1)$.
 Then the limiting tensor,
as a map $V_1^*\times V_2^*\times V_3^* \ra\BC$, is 
$$(P,Q,R)\mapsto \ttrace(P_0Q_0R_1)+ \ttrace(P_0Q_1R_0)+\ttrace(P_1Q_0R_0).
$$
Call this tensor $\tilde M$. First observe that $\tilde M $ uses all the variables (i.e.,
considered   as a linear map  $\tilde M: V_1^*\ra V_{2}\ot V_3$, it is injective, and similarly for   its cyclic permutations). 
 Thus it is either in the orbit of  matrix multiplication
or a point in the boundary that is not in 
$End(V_1)\times End(V_2)\times End(V_3)\cdot M_{\eee_2,\eee_3,\eee_1}$, because all such boundary points  have at least one
such linear map non-injective.   

It remains to show that there exist $\tilde M$ such that  $\tilde M\not\in G\cdot M_{\eee_2,\eee_3,\eee_1}$
To prove some  $\tilde M$ is a point in the boundary, we compute the Lie algebra of its stabilizer and
show  it has dimension greater than 
  the the dimension of the stabilizer of matrix multiplication. 
One may take block matrices, e.g., 
$$
X_0=\begin{pmatrix} 0&*\\ *&0\end{pmatrix}, \ X_1=\begin{pmatrix} *& 0\\ 0&*\end{pmatrix},
$$
and $Y_0,Y_1$ have similar shape, but $Z_0,Z_1$ have the shapes reversed. Here one takes any splitting
$\eee_j=\eee_j'+\eee_j''$ to obtain the blocks.

For another  example, if one takes $\eee_j=\eee$ for all $j$,  $X_0$, $Y_0$, $Z_1$ to be the diagonal matrices and
and $X_1$, $Y_1$, $Z_0$ to be the matrices with zero on the diagonal, then one obtains
a stabilizer of dimension $4\eee^2-2\eee> 3\eee^2-1$. (This example  coincides with the previous one  when all $\eee_j=2$.)

To calculate the stabilizer  of $\tilde{M}$,
   first write down the tensor expression 
of $\tilde{M}\in V_1\otimes V_2\otimes V_3$ with respect to  fixed bases of $V_1$, $V_2$, $V_3$.
 Then   set an equation $(X,Y,Z).\tilde{M}=0$ where $X\in \fgl(V_1)$, $Y\in \fgl(V_2)$ and $Z\in \fgl(V_3)$ are unknowns.
Recall that here the action of $(X,Y,Z)$ on $\tilde{M}$ is the Lie algebra action, 
so we obtain a collection of linear equations.
Finally we solve this collection  of linear equations and count the dimension 
of the solution space. This dimension is the dimension of the stabilizer of $\tilde{M}$ in $GL(V_1)\times GL(V_2)\times GL(V_3)$.

To give an explicit example, let  $\eee_1={\eee}_2={\eee}_3={\eee}$
 and let $X_0=diag(x_1^1,...,x_{\eee}^{\eee})$,
 $Y_0=diag(y_1^1,...,y_{\eee}^{\eee})$, $Z_0=diag(z_1^1,...,z_{\eee}^{\eee})$,
 $X_1=(x^i_j)-X_0$, $Y_1=(y^i_j)-Y_0$, $Z_1=(z^i_j)-Z_0$. 
Then \[\tilde{M}=\sum_{i,j=1}^{{\eee}}(x^{i}_{j}y^{j}_{j}+x^{i}_{i}y^{i}_{j})z^{j}_{i}.\]
Let $X=\sum a^{(^{i}_{j})}_{(^{k}_{l})}X^{(^{k}_{l})}_{(^{i}_{j})}$ 
be an element of $\fgl(V_1)$, where $\{X^{(^{k}_{l})}_{(^{i}_{j})}\}$
 is a basis of $\fgl(V_1)$, and define $Y$ and $Z$ in the same pattern with 
coefficients $b^{(^{i}_{j})}_{(^{k}_{l})}$'s and $c^{(^{i}_{j})}_{(^{k}_{l})}$'s,
 respectively. Consider the equation $(X,Y,Z).T=0$ and we want to solve this equation 
for $a^{(^{i}_{j})}_{(^{k}_{l})}$'s, $b^{(^{i}_{j})}_{(^{k}_{l})}$'s and $c^{(^{i}_{j})}_{(^{k}_{l})}$'s. 
For these equations to hold, the coefficients of $z^{j}_{i}$'s must be zero. That is, for each pair $(j,i)$ of indices we have:
\[\sum_{k,l=1}^{\eee}a^{(^{i}_{j})}_{(^{k}_{l})}x^{k}_{l}y^{j}_{j}+
b^{(^{j}_{j})}_{(^{k}_{l})}x^{i}_{j}y_{k}^{l}+a^{(^{i}_{i})}_{(^{k}_{l})}x^{k}_{l}y^{i}_{j}+b^{(^{i}_{j})}_{(^{k}_{l})}x^{i}_{i}y^{k}_{l}
+c^{(^{l}_{k})}_{(^{j}_{i})}(x^{k}_{l}y^{l}_{l}+x^{k}_{k}y^{k}_{l})=0.\]
For these equations to hold, the coefficients of $y^{r}_{s}$'s must be zero. For example, if $s\ne j$, $r\ne s$ then we have:
\[b^{(^{j}_{j})}_{(^{r}_{s})}x^{i}_{j}+b^{(^{i}_{j})}_{(^{r}_{s})}x^{i}_{i}+c^{(^{s}_{r})}_{(^{j}_{i})}x^{r}_{r}=0\]
Now coefficients of $x$ terms must be zero, for instance, if $i\ne j$ and $i\ne r$, then we have:
\[b^{(^{j}_{j})}_{(^{r}_{s})}=0,\ \ \ 
 b^{(^{i}_{j})}_{(^{r}_{s})}=0, \ \ \ 
 c^{(^{s}_{r})}_{(^{j}_{i})}=0.\] 
If one  writes down and solves all such linear equations,  the dimension of the solution is $4\eee^{2}-2\eee$.

The same construction works for larger loops and cycles in larger graphs as it is essentially local - one just
takes all other curves the constant curve equal to  the identity.
\end{proof}

\begin{remark}
When $\eee_1=\eee_2=\eee_3=2$ we obtain a codimension one component of the boundary. 
In general, the dimension of the stabilizer  is much larger than the dimension of $G$,
so the orbit closures of these points do not give rise to codimension one components of the boundary.
It remains an interesting problem to find the codimension one components of the boundary.
\end{remark}

\section{Algebraic geometry perspective}
For readers familiar with  algebraic geometry, we recast the previous section in the language of algebraic geometry and put it
in a larger context. This section also serves to motivate the proof of the previous section.

To make the parallel
with the GCT program clearer, we describe the Zariski closure as the cone over 
  the (closure of) the image 
of the rational map (i.e., the \lq\lq closure\rq\rq\  of the map defined on a Zariski open subset)
\begin{align}
\BP End(V_1)\times \BP End(V_2)\times \BP End(V_3)
&\dashrightarrow
\BP (V_1\ot V_2\ot V_3)\\
\nonumber
([X],[Y],[Z])&\mapsto  (X,Y,Z)\cdot [M_{\eee_2,\eee_3,\eee_1}].
\end{align}
(Compare with the map $\psi$ in \cite[\S 7.2]{BLMW}.) A dashed arrow is used to indicate the map is
not everywhere defined.

The indeterminacy locus (that is, points  $([X],[Y],[Z])$ where the map is not defined),  consists of $([X],[Y],[Z])$ such that
for all triples of matrices $P,Q,R$, $\ttrace( X( P) Y( Q) Z(R))=0$.
In principle one can obtain \eqref{wereitclosed} as the image of a map from  a succession of blow-ups  
of $\BP End(V_1)\times \BP End(V_2)\times \BP End(V_3)$. (See, e.g., \cite[p. 81]{Harris}
for the definition of a blow-up)

One way to attain a point in the indeterminacy locus is to take  $([X_0],[Y_0],[Z_0])$ as described in the proof.   Taking a  
curve in $G$ that limits to this point may or may not give something new. In the proof we gave two  explicit choices that
do  give something new.

A more invariant way to discuss that  $\tilde M\not\in End(V_1)\times End(V_2)\times End(V_3)\cdot M_{\eee_2,\eee_3,\eee_1}$ is to
consider an  auxiliary
variety, called a {\it subspace variety}, 
$$
Sub_{\fff_1\hd\fff_n}(\bold V):=\{
T\in V_1\otc V_n \mid
\exists V_j'\subset V_j, \tdim V_j'=\fff_j,
{\rm \ and \ } T\in V_1'\otc V_n'\}, 
$$
and observe that if $T\in \times_j \tEnd(V_j)\cdot M_{\overarrow {\bold e}}$
and $T\notin \times_j GL(V_j)\cdot M_{\overarrow {\bold e}}$, then
$T\in Sub_{\fff_1 , \hd\fff_n}(\bold V)$
where $\fff_j <\eee_j$ for at least one $j$.

The statement that \lq\lq $\tilde M$ uses all the variables\rq\rq\  may be rephrased as saying  that $\tilde M\notin Sub_{\eee_2\eee_3 -1,
\eee_2\eee_1-1,\eee_3\eee_1-1}(V_1\ot V_2\ot V_3)$

\section{Reduction from the supercritical case to the critical case
with the same graph}

For a vector space $W$, let $G(k,W)$ denote the Grassmannian of $k$-planes through the
origin in $W$. Let $\cS\ra G(k,W)$ denote the tautological rank $k$ vector bundle
whose fiber over $E\in G(k,W)$ is the $k$-plane $E$.
Assume $\fff_j\leq \bv_j$ for all $j$ with at least
one inequality strict. 
Form the vector bundle $\cS_1\otc \cS_n$ over $G(\fff_1,V_1)\ctimes G(\fff_n, V_n)$,
where $\cS_j\ra G(\fff_j,V_j)$ are the tautological
subspace bundles. 
Note that the total space of  $\cS_1\otc \cS_n$ maps to $\bold V$ with image
$Sub_{\overarrow\fff}(\bold V)$.
Define a   fiber sub-bundle, whose
fiber over $( U_1 \ctimes U_n)\in G(\fff_1,V_1)\ctimes G(\fff_n, V_n)$ is
$TNS(\G,\overarrow\eee, U_1\otc U_n)$. Denote this bundle by 
$TNS(\G,\overarrow\eee, \cS_1\otc \cS_n)$.

The supercritical cases may be realized, in the language of Kempf,  as a \lq\lq collapsing of a bundle\rq\rq\  over the critical
cases as follows:

\begin{proposition}
Assume $\fff_j:=\Pi_{s\in e(j)}\eee_s\leq \bv_j$.
Then $TNS(\G,\overarrow\eee, \bold V)$ is the image of the  bundle
$TNS(\G,\overarrow\eee, \cS_1\otc \cS_n)$ under the map to $\bold V$.
In particular
$$
\tdim( TNS(\G,\overarrow\eee, \bold V))=
\tdim( TNS(\G,\overarrow\eee, \BC^{\fff_1}\otc  \BC^{\fff_n}))
+   \sum_{j=1}^n\fff_j(\bv_j-\fff_j).
$$
\end{proposition}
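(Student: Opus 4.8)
The plan is to verify the set-theoretic equality $TNS(\G,\overarrow\eee,\bold V) = \pi(TNS(\G,\overarrow\eee,\cS_1\otc\cS_n))$, where $\pi\colon \cS_1\otc\cS_n \to \bold V$ is the total-space map whose image is $Sub_{\overarrow\fff}(\bold V)$, and then extract the dimension formula from this description together with a generic-finiteness/fiber-dimension count. First I would show the inclusion $\supseteq$: a point of the bundle over $(U_1\ctimes U_n)$ is by definition an element of $TNS(\G,\overarrow\eee,U_1\otc U_n)$, i.e. a contraction $Con(T_1\otc T_n)$ with $T_j\in U_j\ot(\ot_{s\in in(j)}E_s)\ot(\ot_{t\in out(j)}E_t^*)$; since $U_j\subseteq V_j$, this is also a valid choice of $T_j$ for the ambient definition, so the image lies in $TNS(\G,\overarrow\eee,\bold V)$. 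For $\subseteq$, given $T = Con(T_1\otc T_n)\in TNS(\G,\overarrow\eee,\bold V)$ with $T_j\in V_j\ot A_j$ where $A_j := (\ot_{s\in in(j)}E_s)\ot(\ot_{t\in out(j)}E_t^*)$ has dimension exactly $\fff_j = \Pi_{s\in e(j)}\eee_s$, I would let $U_j\subseteq V_j$ be the image of the induced map $A_j^*\to V_j$; then $\tdim U_j\leq \fff_j = \tdim A_j$, so $U_j$ sits inside some $\fff_j$-dimensional subspace of $V_j$ (using here that $\fff_j\leq\bv_j$), and $T_j\in U_j\ot A_j$, hence $T$ lies in the fiber of the bundle over that point of the Grassmannian product. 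This gives the first assertion.

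For the dimension formula, I would invoke the standard ``collapsing of a bundle'' principle of Kempf: if $\mathcal{E}\to B$ is a vector (sub)bundle over an irreducible projective base and $\pi\colon \mathcal{E}\to W$ is the induced map to the ambient vector space, then $\tdim\,\overline{\pi(\mathcal{E})} = \tdim B + (\text{generic fiber rank of }\mathcal{E})$ provided $\pi$ is generically finite onto its image; the total space $\mathcal{E}$ is irreducible of dimension $\tdim B + \mathrm{rk}\,\mathcal{E}$, and $\tdim\,\overline{\pi(\mathcal E)} = \tdim\mathcal E - \dim(\text{generic }\pi\text{-fiber})$. Here $B = G(\fff_1,V_1)\ctimes G(\fff_n,V_n)$ has dimension $\sum_j \fff_j(\bv_j-\fff_j)$, and the fiber of $TNS(\G,\overarrow\eee,\cS_1\otc\cS_n)$ over a point is $TNS(\G,\overarrow\eee,\BC^{\fff_1}\otc\BC^{\fff_n})$, which is the critical-case object and is irreducible (it is the closure-of-an-orbit-type set handled in Section 2) of the asserted dimension. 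So $\tdim(\mathcal E) = \tdim(TNS(\G,\overarrow\eee,\BC^{\fff_1}\otc\BC^{\fff_n})) + \sum_j\fff_j(\bv_j-\fff_j)$, and it remains to see $\pi$ is generically finite, i.e. the generic $\pi$-fiber is zero-dimensional, so that $\tdim(TNS(\G,\overarrow\eee,\bold V)) = \tdim\,\overline{\pi(\mathcal E)} = \tdim\mathcal E$ — and moreover $TNS(\G,\overarrow\eee,\bold V)$ is irreducible.

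The main obstacle is exactly this generic-finiteness claim: for a generic $T$ in the image, the subspaces $U_j$ recovered above (the ``support'' spaces $\mathrm{Image}(T\colon \text{one factor}^* \to \text{tensor of the others})$, or rather the smallest subspaces $U_j\subseteq V_j$ with $T\in U_1\otc U_n$) are \emph{uniquely determined} by $T$ once they are $\fff_j$-dimensional, because the multilinear rank / subspace data of a tensor is intrinsic. I would argue: the generic element of $TNS(\G,\overarrow\eee,\BC^{\fff_1}\otc\BC^{\fff_n})$ uses all variables — equivalently is not in any $Sub_{\overarrow{\fff'}}$ with some $\fff'_j<\fff_j$ — which is the analogue of the ``$\tilde M$ uses all the variables'' observation and follows because matrix multiplication itself uses all variables; hence for generic $T$ in the image each recovered $U_j$ has dimension exactly $\fff_j$, so the point $(U_1\ctimes U_n)\in B$ is forced, and the $\pi$-fiber over generic $T$ is a single point. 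That pins down the fiber dimension as $0$ and yields the displayed formula. The remaining details — irreducibility of the critical-case fiber and of the Grassmannian product, and the elementary fact that the collapsing map is a morphism of varieties so Zariski and Euclidean closures agree — are routine.
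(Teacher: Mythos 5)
Your proof is correct and follows the same route as the paper: the set-theoretic identification, via the observation that each $T_j\in V_j\ot A_j$ with $\tdim A_j=\fff_j$ must lie in some $U_j\ot A_j$ with $\tdim U_j=\fff_j$ (possible precisely because $\fff_j\leq \bv_j$), is exactly the paper's one-line argument, just written out. You go further than the paper on the dimension formula, which the paper states without justification: your generic-finiteness argument --- that a generic critical tensor is concise, so the supporting subspaces $U_j$ are uniquely recoverable from $T$ and the collapsing map is generically injective --- is the correct reason why the image has the same dimension as the total space of the bundle.
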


\begin{proof} If $\Pi_{s\in e(j)}\eee_s\leq \bv_j$, then  any tensor
 $T\in   V_j\ot(\ot_{s\in in(j)}E_s) \ot (\ot_{t\in out(j)}E_t^*)$,
must lie in some $V_j'\ot(\ot_{s\in in(j)}E_s) \ot (\ot_{t\in out(j)}E_t^*) $
with $\tdim V_j'=\fff_j$. 
 The space $TNS(\G,\overarrow\eee, \bold V)$ is
the image of this subbundle under the map to $\bold V$.
\end{proof}

This type of  bundle
construction   is standard, see \cite{Ltensor,weyman}.
Using the techniques in \cite{weyman}, one may reduce questions about a supercritical case
to the corresponding critical case.

\section{Reduction of cases with subcritical vertices of valence one}
The subcritical case in general can be understood in terms of projections of critical
cases, but this is not useful for extracting information. However,  if a subcritical
vertex has valence one, one may simply reduce to a smaller graph as we now describe.

%\begin{figure}[!htb]\begin{center}
%\includegraphics[scale=.3]{tnetchain.eps}
%\caption{\small{ ... }}  
%\end{center}
%\end{figure}

\begin{figure}[!htb]\begin{center}
\includegraphics[scale=.4]{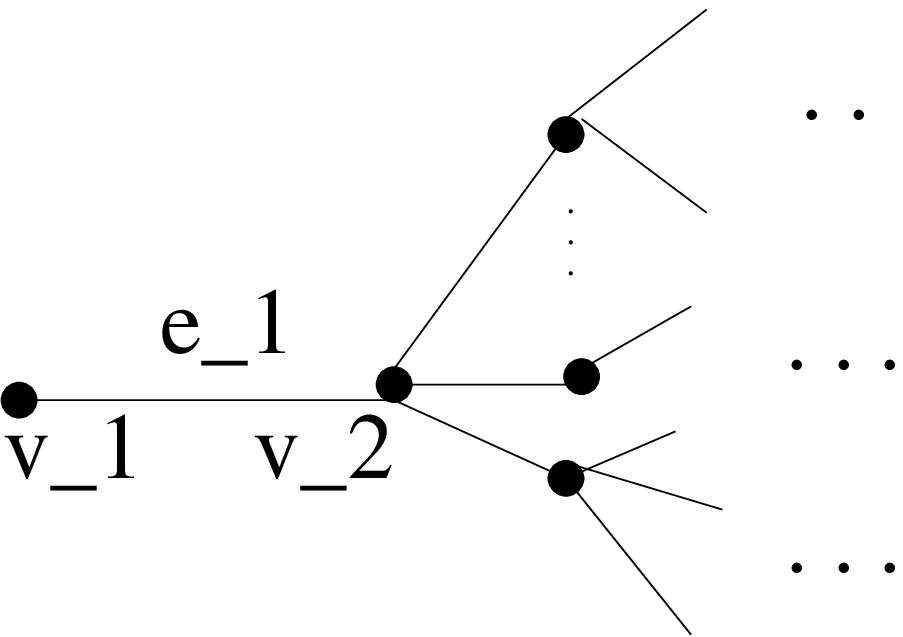}
%\caption{\small{ ... }}  
\end{center}
\end{figure}

\begin{proposition}\label{subcrred} Let
$TNS(\G,\overrightarrow{\eee},\bold V)$ be a tensor network
state, let $v$ be a vertex of  $\G$ with valence one.
Relabel the vertices such that $v=v_1$ and  so that
$v_1$ is attached by $e_1$ to $v_2$. 
If  $\bv_{1}\leq \eee_{1}$, then $TNS(\G,\overrightarrow{\eee},V_1\otc V_n )=TNS(\tilde\G ,\overrightarrow{\tilde\eee},\tilde V_{1}\otimes V_{3}\otimes...\otimes V_{n})$, where $\tilde \G$ is $\G$ with $v_1$ and $e_1$ removed, $\overrightarrow{\tilde \eee}$ is the vector $(\eee_{2},...,\eee_{n})$ and $\tilde V_{1}=V_{1}\otimes V_{2}$.  
\end{proposition}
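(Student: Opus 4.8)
The plan is to reduce the asserted equality of sets to a purely local statement about the single contraction across the edge $e_1$, observing that every other contraction in $Con$ is indexed by an edge of $\tilde\G$ and is performed identically on both sides. Orient $e_1$ from $v_1$ to $v_2$; by the remark on orientations this costs nothing. Write $W:=(\ot_{s\in in(2),\,s\neq 1}E_s)\ot(\ot_{t\in out(2),\,t\neq 1}E_t^*)$ for the edge spaces at $v_2$ other than $E_1$. Then a vertex tensor at $v_1$ is an element $T_1\in V_1\ot E_1^*$, a vertex tensor at $v_2$ is an element $T_2\in V_2\ot E_1\ot W$, and the tensors $T_3\hd T_n$ at the remaining vertices are untouched when $v_1$ and $e_1$ are deleted. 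The edge spaces at the new vertex $\tilde v_1$ of $\tilde\G$ are exactly $W$, so a vertex tensor at $\tilde v_1$ is an element of $\tilde V_1\ot W=V_1\ot V_2\ot W$.

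For the inclusion $TNS(\G,\overrightarrow{\eee},\bold V)\subseteq TNS(\tilde\G,\overrightarrow{\tilde\eee},\tilde V_1\ot V_3\otc V_n)$, which needs no hypothesis, I would set $\hat T_2:=Con_{E_1}(T_1\ot T_2)\in V_1\ot V_2\ot W=\tilde V_1\ot W$. This is a legitimate vertex tensor for $\tilde v_1$, and since $Con$ may be computed by first contracting over $e_1$ and then over the remaining edges, $Con(T_1\otc T_n)=Con(\hat T_2\ot T_3\otc T_n)$; hence every $T\in TNS(\G,\overrightarrow{\eee},\bold V)$ lies in $TNS(\tilde\G,\overrightarrow{\tilde\eee},\tilde V_1\ot V_3\otc V_n)$.

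The reverse inclusion is where $\bv_1\leq\eee_1$ enters: I would show every vertex tensor $\hat T_2\in\tilde V_1\ot W=V_1\ot(V_2\ot W)$ arises as $Con_{E_1}(T_1\ot T_2)$. Fix a basis $u_1\hd u_{\bv_1}$ of $V_1$ and write $\hat T_2=u_1\ot S_1+\cdots+u_{\bv_1}\ot S_{\bv_1}$ with $S_i\in V_2\ot W$ uniquely determined. Since $\tdim E_1=\eee_1\geq\bv_1$, choose linearly independent $\ep^1\hd\ep^{\bv_1}\in E_1^*$ and let $\ep_1\hd\ep_{\bv_1}\in E_1$ satisfy $\ep^i(\ep_j)=\delta^i_j$. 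Put $T_1:=u_1\ot\ep^1+\cdots+u_{\bv_1}\ot\ep^{\bv_1}\in V_1\ot E_1^*$ and $T_2:=\ep_1\ot S_1+\cdots+\ep_{\bv_1}\ot S_{\bv_1}\in E_1\ot V_2\ot W$. Then $Con_{E_1}(T_1\ot T_2)=\sum_i u_i\ot S_i=\hat T_2$, and keeping $T_3\hd T_n$ as given exhibits any element of $TNS(\tilde\G,\overrightarrow{\tilde\eee},\tilde V_1\ot V_3\otc V_n)$ inside $TNS(\G,\overrightarrow{\eee},\bold V)$. Together with the previous paragraph this gives equality; the description $\overrightarrow{\tilde\eee}=(\eee_2\hd\eee_n)$ merely records that the remaining edges of $\G$, with their dimensions, are precisely the edges of $\tilde\G$.

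I do not expect a genuine obstacle: the content is the single linear-algebra fact that a full-rank map $E_1^*\to V_1$ (equivalently an embedding $V_1\hookrightarrow E_1$) exists exactly when $\bv_1\leq\eee_1$, together with the fact that a partial contraction along one edge can be absorbed into a single vertex tensor. The only things needing care are bookkeeping: fixing an orientation for $e_1$ and tracking whether the $E_1$ or $E_1^*$ factor sits in $T_1$ versus $T_2$, identifying $W$ correctly when $v_2$ itself has small valence (including the degenerate cases $W=\BC$ and $n=2$, where $\tilde\G$ is a single vertex), and relabeling vertices and edges consistently.
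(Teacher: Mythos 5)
Your proof is correct and follows essentially the same route as the paper's: the inclusion $\subseteq$ is the observation that the partial contraction over $e_1$ can be absorbed into a single vertex tensor at the merged vertex, and the reverse inclusion uses $\bv_1\leq\eee_1$ to realize an arbitrary element of $\tilde V_1\ot W$ as $Con_{E_1}(T_1\ot T_2)$ via a basis of $V_1$ paired with independent elements of $E_1^*$. Your write-up is in fact more careful with the orientation and index bookkeeping than the paper's rather terse version, but the underlying argument is identical.
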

\begin{proof}
 A general element in $TNS(\G,\overrightarrow{\eee},V_1\otc V_n)$ is of the form $\sum_{i,j=1}^{\eee_{1},\eee_{2}}$ $u_{i}\otimes v_{iz}\otimes w_{z}$,
where $w_z\in V_3\otc V_n$. Obviously, $TNS(\G,\overrightarrow{\eee},V_1\otc V_n)\subseteq TNS(\tilde\G ,\overrightarrow{\tilde\eee},\tilde V_{1}\otimes V_{3}\otimes...\otimes V_{n})=:TNS(\tilde\G ,\overrightarrow{\tilde\eee},\tilde{\bold V})$. Conversely, a general element in $TNS(\tilde\G ,\overrightarrow{\tilde\eee},\tilde{\bold V}))$ is of the form $\sum_{z} X_{z}\otimes w_{z}$, $X_{z}\in V_{1}\otimes V_{2}$. Since $\bv_{1}\leq \eee_{1}$, we may  express $X_{z}$ in the form $\sum_{i=1}^{e_{1}}u_{i}\otimes v_{iz}$, where   $u_{1},...,u_{v_{1}}$ is a basis of $V_{1}$. Therefore, 
$TNS(\G,\overrightarrow{\eee},\bold V)\supseteq 
TNS(\tilde\G ,\overrightarrow{\tilde\eee},\tilde{\bold V})$..
\end{proof}
 
\section{Trees}
With trees one can apply the two reductions successively to   reduce to a tower of bundles
where the fiber in the last bundle is a linear space. 
The point is that a critical vertex is both sub- and supercritical, so
one can reduce at valence one vertices iteratively. Here are a few examples in the special case of chains. The result is similar to the
Allman-Rhodes reduction theorem for phylogenetic trees \cite{AR1}.

\begin{example}
Let $\G$ be a chain with $3$ vertices. If it is supercritical, $TNS(\G,\overrightarrow{\eee},\bold V)=V_1\ot V_2\ot V_3$. Otherwise
$TNS(\G,\overrightarrow{\eee},\bold V)=  Sub_{\eee_1,\eee_1\eee_2,\eee_2}(V_1\ot V_2\ot V_3)$.
\end{example}

\begin{example}
Let $\G$ be a chain with $4$ vertices. If $\bv_{1}\leq \eee_{1}$ and $\bv_{4} \leq \eee_{3}$, then, writing $W=V_1\ot V_2$
and $U=V_3\ot V_4$, by
Proposition  \ref{subcrred}, $TNS(\G,\overrightarrow{\eee},\bold V)$ is the set of rank
at most $\eee_2$ elements in
$W\ot U$ (the secant variety of the two-factor Segre).
Other chains of length four have similar complete descriptions. 
\end{example}
\begin{example}
Let $\G$ be a chain with $5$ vertices. Assume that $\bv_{1}\leq \eee_{1}$,  $\bv_{5}\leq \eee_{4}$ and
$\bv_1\bv_2\geq \eee_2$ and $\bv_4\bv_5\geq \eee_3$. Then $TNS(\G,\overrightarrow{\eee},\bold V)$ is the image of a bundle
over $G(\eee_2,V_1\ot V_2)\times G(\eee_3,V_4\ot V_5)$ whose fiber is the set of tensor network states
associated to a chain of length three. 
\end{example}

\bibliographystyle{amsplain}
 
\bibliography{Lmatrix}

\end{document}